\theoremstyle{plain}
\numberwithin{equation}{section}
\newtheorem{thm}{Theorem}[section]
\newtheorem{theorem}[thm]{Theorem}
\newtheorem{example}[thm]{Example}
\newtheorem{definition}[thm]{Definition}
\newtheorem{proposition}[thm]{Proposition}
\newtheorem{corollary}[thm]{Corollary}
\title{The Problem of Pawns}
\author{Tricia Muldoon Brown\\
Georgia Southern University}
\date{}
\begin{document}

\maketitle

\begin{abstract}
Using a bijective proof, we show the number of ways to arrange a maximum number of nonattacking pawns on a $2m\times 2m$ chessboard is ${2m\choose m}^2$, and more generally, the number of ways to arrange a maximum number of nonattacking pawns on a $2n \times 2m$ chessboard is ${m+n\choose n}^2$.
\end{abstract}

\setboolean{piececounter}{false}
\setboolean{showcomputer}{false}

\section{Introduction}
A set of pieces on a chessboard is said to be independent if no piece may attack another.  Independence problems on chessboards have long been studied; both in terms of maximum arrangements as well as the number of such arrangements.  For all traditional chess pieces, kings, queens, bishops, rooks, knights, and pawns, the maximum size of an independent set is known.  When enumerating maximum arrangements, some of the problems, for example in the case of rooks or bishops, have elementary solutions.  (See Dudeney~\cite{Dudeney} for an early discussion of independence problems.)  For other pieces, such as in the case of queens, the number of maximum independent arrangements is unknown, or in the case of kings an asymptotic approximation is given by Larson~\cite{Larson}, but an exact value is unknown.  Here we wish to enumerate the number of maximum arrangements of nonattacking pawns.  Arrangements of nonattacking pawns have been studied by Kitaev and Mansour~\cite{Kitaev_Mansour} who provide upper and lower bounds on the number of arrangements of pawns on $m\times n$ rectangles using Fibonacci numbers as well as an algorithm to generate an explicit formula.

As there are only two distinct arrangements for odd length chessboards, we focus on boards with even length.  Because we can divide a $2m\times 2m$ chessboard into $m^2$ $2\times 2$ squares each with at most two pawns, the maximum number of independent pawns is at most $2m^2$.  This value is easily achieved, and examples are illustrated in Figure~\ref{pawns}.
\begin{figure}[ht]
\centering
\includegraphics{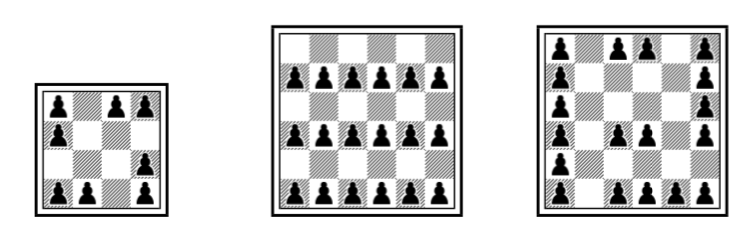}
\caption{Arrangements of nonattacking pawns for even length chessboards}
\label{pawns}
\end{figure}

We will provide a bijection between the set of maximum nonattacking arrangements of pawns on a $2m\times 2m$ chessboard and the set of subsets of $m$ rows and $m$ columns of a $2m\times 2m $ matrix.

\section{Bijection}
Instead of considering full arrangements of nonattacking pawns on a $2m\times 2m$ chessboard, we first consider arrangements on a $2\times 2$ chessboard.  There are four possible arrangements labeled with A, B, C, and D, as illustrated in Figure~\ref{2times2}.  We define a function $f$ on this set, where $f(A) = D$ and $f(B)=f(C)=f(D)=C$.  We use this function to define an $(m+1)\times (m+1)$ matrix $M_{2m} = (m_{i,j})_{1\leq i, j \leq m+1}$ whose entries correspond to arrangements of $2m$ independent pawns on a $2 \times 2m$ rectangular chessboard.
   
\begin{figure}[ht]
\centering
\includegraphics{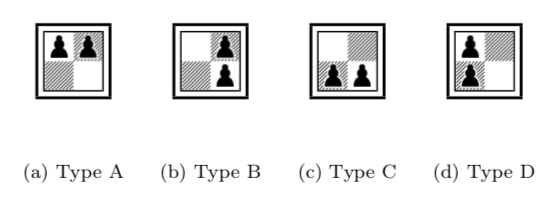}
\caption{The four maximum arrangements of 2 pawns on a $2\times 2$ chessboard}
\label{2times2}
\end{figure}

\begin{definition}\label{defM2m}
Let $M_{2m}=(m_{i,j})_{1\leq i,j \leq m+1}$ be the matrix who entries consist of arrangements of $2m$ nonattacking pawns on a $2\times 2m$ rectangular chessboard. We can think of each rectangle as a string of $m$ $2\times 2$ squares, each with exactly two pawns.  The entries of $M_{2m}$ are defined as follows:
\begin{enumerate}[i.]
\item For $1\leq j \leq m+1$, let $m_{1,j}$ be the arrangement where the leftmost $(m+1-j)$ $2\times 2$ squares of the rectangular chessboard are of Type A and the remaining rightmost $(j-1)$ squares are of Type B.   
\item For $1\leq i \leq m+1$, use $m_{1,j}$ to generate the arrangements $m_{i,j}$ by replacing the leftmost $(i-1)$ $2\times 2$ squares of $m_{1,j}$, with their image under the function $f$ and leaving the rightmost $(m+1-i)$ $2\times 2$ squares fixed. 
\end{enumerate}
\end{definition}

See Figure~\ref{entries} for an example of an entry in the first row and fifth row of $M_{14}$, and see Figure~\ref{M6} for the entire matrix $M_6$.  We claim this matrix contains all possible nonattacking arrangements of pawns on a $2\times 2m$ rectangular chessboard.
 
\begin{figure}[ht]
\centering
\includegraphics{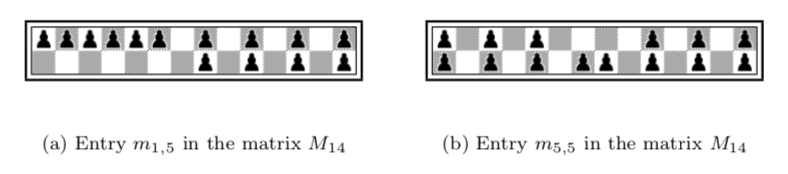}
\caption{Entries from the matrix $M_{14}$}
\label{entries}
\end{figure}

\begin{figure}[ht]
\centering
\scalebox{0.9}{\includegraphics{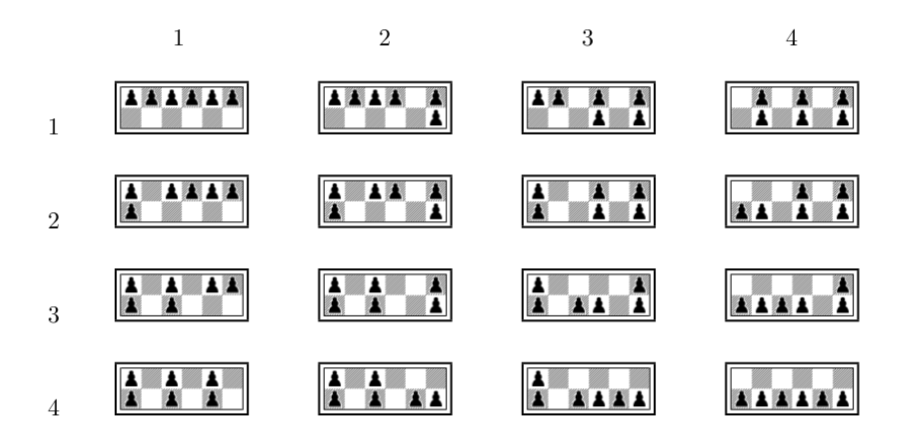}}
\caption{Entries in the matrix $M_6 = (m_{i,j})_{1\leq i,j \leq 4}$}
\label{M6}
\end{figure}

\begin{proposition}\label{uniqueness}
Every nonattacking arrangement of $2m$ pawns on a $2\times 2m$ rectangle appears exactly once in the matrix $M_{2m}$.
\end{proposition}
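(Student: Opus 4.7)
The plan is to classify all maximum nonattacking arrangements on a $2\times 2m$ strip as words $T_1\cdots T_m\in\{A,B,C,D\}^m$ (each letter denoting the type of one of the $m$ disjoint $2\times 2$ blocks), and then match the list of valid words against the entries of $M_{2m}$ as parsed from Definition~\ref{defM2m}.

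I would start with a case analysis of admissible adjacencies $(T_k,T_{k+1})$. Since only bottom-row pawns produce attacks, and those attacks go diagonally into the neighboring block, a check of the sixteen ordered pairs should yield: $D$ may be followed by any of $\{A,B,C,D\}$; $A$ by $A$ or $B$; $C$ by $C$ or $B$; and $B$ only by $B$. From these rules I would deduce the structural form of a valid word: nothing other than $D$ can precede $D$, so the $D$'s form an initial run; nothing other than $B$ can follow $B$, so the $B$'s form a terminal run; and $A$ and $C$ are never adjacent, so the middle run is uniform in a single letter. Thus every valid word has the shape $D^aX^bB^c$ with $X\in\{A,C\}$, $a+b+c=m$, and the choice of $X$ relevant only when $b\geq 1$.

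Next I would unfold Definition~\ref{defM2m} entry by entry. The top row reads $m_{1,j}=A^{m+1-j}B^{j-1}$, and applying $f$ to the leftmost $i-1$ blocks turns leading $A$'s into $D$'s and any leading $B$'s into $C$'s. This gives $m_{i,j}=D^{i-1}A^{m+2-i-j}B^{j-1}$ when $i+j\leq m+2$ and $m_{i,j}=D^{m+1-j}C^{i+j-m-2}B^{m+1-i}$ otherwise. For a valid word $D^aX^bB^c$, inverting these formulas locates the unique preimage: $(i,j)=(a+1,c+1)$ if $X=A$ or $b=0$, and $(i,j)=(m+1-c,m+1-a)$ if $X=C$ with $b\geq 1$. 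The two cases are disjoint (the second always contains a $C$, the first never does), so each valid arrangement appears in $M_{2m}$ exactly once; the count $\binom{m+2}{2}+\binom{m+1}{2}=(m+1)^2$ serves as a sanity check.

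The main obstacle is the first step, the adjacency analysis. It is elementary but has to be executed carefully, tracking exactly which pawn attacks which square across the boundary between consecutive blocks; once the adjacency table is in hand, everything else reduces to bookkeeping with the parameters $(a,b,c)$ and $(i,j)$.
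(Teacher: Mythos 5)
Your proposal is correct, but it reaches the conclusion by a genuinely different route than the paper. The paper splits the claim into two separate arguments: first it counts the valid $2\times 2m$ arrangements by induction on $m$ (casing on the type of the leftmost block) to get $(m+1)^2$; then it shows all $(m+1)^2$ matrix entries are pairwise distinct by observing that the type-changes induced by moving down or right in $M_{2m}$ ($A\to B,C,D$; $B\to C$; $D\to C$; $C$ fixed) are irreversible, so two distinct positions can never hold the same entry. The ``exactly once'' conclusion then follows from matching cardinalities. You instead classify the valid arrangements outright as words $D^aX^bB^c$ via the adjacency table (which I have checked: your table $D\to\{A,B,C,D\}$, $A\to\{A,B\}$, $C\to\{B,C\}$, $B\to\{B\}$ is the correct one, and the structural deduction from it is sound), derive the closed form $m_{i,j}=D^{i-1}A^{m+2-i-j}B^{j-1}$ for $i+j\leq m+2$ and $m_{i,j}=D^{m+1-j}C^{i+j-m-2}B^{m+1-i}$ otherwise, and exhibit an explicit inverse. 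Your version is more computational but buys more: it simultaneously verifies that every matrix entry really is a nonattacking arrangement (which the paper's definition asserts but never checks), it locates each arrangement's position in $M_{2m}$ explicitly rather than by a counting argument, and the closed form and adjacency table are exactly the tools needed later in Propositions~\ref{independent} and the surjectivity argument. The one thing to watch in writing it up is the phrase ``only bottom-row pawns produce attacks'': the cross-block conflict condition is symmetric (a top pawn in column $2k$ against a bottom pawn in column $2k+1$, or vice versa), so state the check in terms of diagonally adjacent pawns across the block boundary; your resulting table is nonetheless correct.
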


\begin{proof}
To begin, we show the number of distinct arrangements of pawns on a $2\times 2m$ rectangle is $(m+1)^2$.  For $m=1$, a $2\times 2$ square has the four distinct arrangements shown in Figure~\ref{2times2}, so we induct on $m$.  The leftmost $2\times 2$ square of a $2\times 2m$ rectangle may have Type A, B, C, or D.  First, assume this leftmost square has Type D.  Any maximum independent arrangement of a $2\times 2(m-1)$ rectangle may be appended to the Type D square creating $m^2$ distinct maximum nonattacking arrangements.  Next, if the leftmost square has Type A or C, it must be followed by a square of same type or of Type B.  But in any $2\times 2m$ rectangle, when reading from left to right, as soon as a Type B square is introduced in the strip, all remaining squares to the right must also be of Type B.  Thus any $2\times 2m$ strip beginning with a Type A or Type C square consists of $k$ squares of Type A or C followed by $m-k$ squares of Type B for $1\leq k \leq m$.  Finally there is one possible arrangement beginning with a Type B square.  Thus we have
\begin{equation*}
m^2 +2m+1 = (m+1)^2
\end{equation*}
distinct arrangements as desired.

Further, no arrangement appears more than once in the matrix $M_{2m}$.  We continue to think of the entries of the matrix $M_{2m}$ as a string of $m$ $2\times 2$ squares.  We observe, by construction, as one reads from top to bottom down a column of the matrix, the only actions on these $2\times 2$ squares are:
\begin{enumerate}[i.]
\item Type A squares may be changed to Type D squares.
\item Type B squares may be changed to Type C squares.
\item Any type square may remain fixed.
\end{enumerate}
Similarly, as you read from left to right across a row of the matrix, the only actions are:
\begin{enumerate}[i.]
\item Type A squares may be changed to Type B squares.
\item Type D squares may be changed to Type C squares.
\item Any type square may remain fixed.
\end{enumerate}
Given any two arrangements in distinct positions in the matrix $M_{2m}$, at least one square has changed from the lower-indexed entry to the higher-indexed entry.  If that square was of Type B or D, respectively, it was changed into a Type C square and no action may change it back to a Type B or D square, respectively.  If the square was of Type A, then it was changed to a Type B, C, or D square, but in any case, may not return to Type A.  Because Type C squares cannot be changed, we have a matrix with unique elements whose size is equal to the size of the set, so therefore each independent maximum arrangement of pawns occurs exactly once in $M_{2m}$.
\end{proof}

Now, we define a map from the set of subsets of $m$ rows and $m$ columns of a $2m\times 2m$ matrix into the set of nonattacking arrangements of $2m^2$ pawns.
\begin{definition}
Suppose the rows and columns of a $2m\times 2m$ matrix are indexed by $[2m]$.  Set 
\begin{equation*}
A =\{ C\cup R : C,R\subset [2m] \hbox{ and } |C|=|R|=m\},
\end{equation*}
that is, $A$ is the set of all subsets
consisting of $m$ rows $R=\{r_1, r_2, \ldots, r_{m}\}\subset [2m]$ and $m$ columns $C=\{c_1,c_2, \ldots, c_{m}\}\subset [2m]$.  Let $B$ be the set of all nonattacking arrangements of $2m^2$ pawns on a $2m\times 2m$ chessboard.  Define the map $\Phi: A \longrightarrow B$ as follows:

Given a subset $C\cup R$, assume without loss of generality that $r_1 < r_2< \cdots <r_m$ and $c_1 < c_2 < \cdots < c_m$.  Then set $S$ to be the set of $m$ ordered pairs where 
\begin{equation*}
S=\{(a_i, b_i) : (a_i, b_i) = (r_i-i+1, c_i-i+1) \hbox{ for } 1\leq i \leq m\}.
\end{equation*}
For each ordered pair $(a_i, b_i)$, identify the $2\times 2m$ chessboard arrangement $m_{a_i,b_i}$ from the matrix $M_{2m}$.  Concatenate these strips sequentially so that $m_{a_i,b_i}$ is directly above $m_{a_{i+1}, b_{i+1}}$ for $1\leq i \leq m-1$ to create an arrangement of $2m^2$ pawns on a $2m\times 2m$ chessboard.  This arrangement is the image of the subset $C\cup R$ under $\Phi$.
\end{definition}

\begin{example}
Given $2m=6$, suppose $R=\{1,4,5\}$ and $C=\{2,4,6\}$.  Then $S=\{(1,2), (3,3), (3,4)\}$.  Thus, we concatenate the arrangements $m_{1,2}, m_{3,3}, m_{3,4}$ from Figure~\ref{M6} to get the maximum $6\times 6$ arrangement:
\begin{center}
\includegraphics{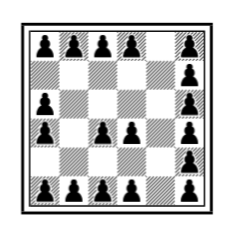}
\end{center}
\end{example}

\begin{example}
Given $2m=8$, suppose $R=\{2,3,4,8\}$ and $C=\{1,6,7,8\}$.  Then $S=\{(2,1),(2,5),(2,5),(5,5)\}$ and we have the following $8\times 8$ arrangement:
\begin{center}
\includegraphics{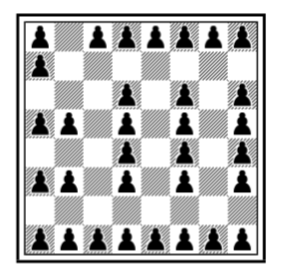}
\end{center}
\end{example}

We check that the arrangements of pawns given by the function $\Phi$ are nonattacking.
\begin{proposition}\label{independent}
Each arrangement of $2m^2$ pawns on a $2m\times 2m$ chessboard in the image $\Phi(A)$ is independent.
\end{proposition}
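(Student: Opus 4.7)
The plan is to reduce the claim to a local check at each horizontal seam between consecutive strips, then dispatch each seam using the structure of the matrix $M_{2m}$. First, since $r_1 < r_2 < \cdots < r_m$ are strictly increasing integers, the indices $a_i = r_i - i + 1$ satisfy $a_{i+1} - a_i = r_{i+1} - r_i - 1 \geq 0$, so $a_1 \leq a_2 \leq \cdots \leq a_m$; similarly $b_1 \leq b_2 \leq \cdots \leq b_m$. By Proposition~\ref{uniqueness} each constituent strip $m_{a_i, b_i}$ is nonattacking on its own $2 \times 2m$ rectangle, so any attacking pair in $\Phi(C \cup R)$ must straddle a seam between two consecutive strips. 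It therefore suffices to prove the local claim: for any $(p, q), (p', q') \in [m+1]^2$ with $p \leq p'$ and $q \leq q'$, stacking $m_{p, q}$ directly above $m_{p', q'}$ yields no attacking pair.

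For this local claim, I would unravel Definition~\ref{defM2m} to identify the type at each $2 \times 2$ square position $k$ of $m_{p, q}$: Type D when $k \leq p - 1$ and $k \leq m + 1 - q$; Type A when $k \geq p$ and $k \leq m + 1 - q$; Type C when $k \leq p - 1$ and $k \geq m + 2 - q$; and Type B when $k \geq p$ and $k \geq m + 2 - q$. Comparing position $k$ of $m_{p, q}$ with position $k$ of $m_{p', q'}$ under $p \leq p'$ and $q \leq q'$, the resulting (upper, lower) type pairs at that position can only be $A$ with any of $A, B, C, D$; $B$ with $B$ or $C$; $D$ with $D$ or $C$; or $C$ with $C$. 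A direct inspection of where each $2 \times 2$ type places its pawns shows that these are precisely the pairs for which no pawn in row $2$ of the upper square attacks a pawn in row $1$ of the lower square, so the same-position seam checks all pass.

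It remains to rule out diagonal attacks between $2\times 2$ squares at horizontally adjacent positions across the seam. Such an attack could only arise when a pawn in the lower-right cell of an upper-strip square at position $k$ threatens a pawn in the upper-left cell of the lower-strip square at position $k+1$, or in the mirror configuration involving position $k-1$. The upper-square types carrying a lower-right pawn force $q \geq m + 2 - k$ in the explicit description above, while the lower-square types at position $k+1$ carrying an upper-left pawn force $q' \leq m - k$, and together these contradict $q \leq q'$; the mirror configuration is killed symmetrically by $p \leq p'$. The main obstacle is the case analysis in the second paragraph, establishing that the four monotonicity-induced transitions at a single seam position coincide with the list of geometrically compatible vertical stackings of $2\times 2$ types; once that dictionary is in place the cross-position checks reduce to the short inequality arguments just sketched.
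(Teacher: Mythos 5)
Your proposal is correct and follows essentially the same route as the paper: reduce independence to a local check at each horizontal seam between consecutive strips $m_{p,q}$ and $m_{p',q'}$ with $p\le p'$, $q\le q'$, then verify square-by-square that the only type pairs that can occur one above the other (and diagonally across adjacent positions) place no pawns on attacking diagonals. Your explicit inequality description of the type at position $k$ of $m_{p,q}$ is a slightly more coordinatized bookkeeping than the paper's use of the row/column transition rules from Proposition~\ref{uniqueness}, and your explicit verification that $a_1\le\cdots\le a_m$ and $b_1\le\cdots\le b_m$ makes precise a step the paper leaves implicit, but the substance of the argument is the same.
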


\begin{proof}
By construction, we know the the pawns may not attack within each $2\times 2m$ rectangular chessboard, so it is left to show that the pawns may not attack from one rectangle to another.  We apply the restrictions on movement along rows and columns noted in the proof of Proposition~\ref{uniqueness}.

 Let $a=m_{i,j}$ and $b=m_{i',j'}$ be any two nonattacking arrangements from the matrix $M_{2m}$ such that $i\leq i'$ and $j\leq j'$.  We assume $a$ lies directly above $b$ in a maximum arrangement of independent pawns on the $2m\times 2m$ chessboard. Divide each $2\times 2m$ rectangle into $2\times 2$ squares and denote a $2\times 2$ square of $a$, or $b$ respectively, at position $k$ where $1\leq k \leq m$ by $A_k$ or $B_k$, respectively.  First, if $A_k$ has Type A, then the pawns in $A_k$ may not attack any pawns in the arrangement $b$.  Next, suppose $A_k$ has Type B, so thus $B_k$ has Type B or Type C.  In either case the pawns in $A_k$ may not attack the pawns in $B_k$.  However the pawn in $A_k$ may also attack the upper left corner of $B_{k+1}$.  Because the square $A_{k+1}$ must also have Type B, we know $B_{k+1}$ has Type B or C.  In either case there is no pawn in the upper left corner, so pawns in $A_k$ may not attack pawns in $B_{k+1}$.  Similarly, if $A_k$ has Type D then $B_i$ also has Type D, and thus no attack is possible.  In this case a pawn in $A_i$ could also attack the upper right corner of $B_{i-1}$.  We see that $A_{i-1}$ also has Type D, so $B_{k-1}$ has Type D and thus no attack is possible from $A_k$ to $B_{k-1}$.  Finally, suppose $A_k$ is of Type C, so pawns in $A_k$ may attack squares $B_{k-1}, B_k$, and $B_{k+1}$.  We know $A_{k-1}$ is of Type C or Type D and $A_{k+1}$ is of Type C.  So we have that $B_{k-1}$ is of Type C  or D, thus not susceptible to an attack from $A_k$.  The squares $B_k$ and $B_{k+1}$ are both of Type C and also have no pawns that may be attacked by pawns in $A_k$.  Finally, we note in any case, if the squares $B_{k-1}$ or $B_{k+1}$ do not exist, then trivially there is no attacking pawn. Therefore, we have shown that any entry weakly to the left or above another entry in $M_{2m}$ may not attack when placed directly above the second entry, and thus have proven the claim.
\end{proof}

We have shown that each subset in $A$ provides exactly one maximum nonattacking arrangement of pawns on a $2m\times 2m$ chessboard, thus $\Phi(A) \subseteq B$.  It is left to show that no other maximum independent arrangements are possible.

\begin{proposition}
Every nonattacking arrangement of $2m^2$ pawns on a $2m\times 2m$ chessboard is the image of a subset $C\cup R \in A$ under the map $\Phi$.
\end{proposition}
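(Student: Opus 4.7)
The plan is to decompose any maximum nonattacking arrangement into horizontal strips, identify each strip as an entry of $M_{2m}$ via Proposition~\ref{uniqueness}, and then show that the resulting sequence of matrix indices is weakly increasing in both coordinates, so that the preimage $(R,C)$ can be read off.

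First I would establish the strip decomposition. The $2m\times 2m$ board partitions into $m^2$ disjoint $2\times 2$ blocks, each of which accommodates at most two nonattacking pawns. A maximum arrangement must therefore place exactly two pawns in every block, so each horizontal strip $S_k$ (rows $2k-1$ and $2k$) contains exactly $2m$ pawns and is itself a maximum nonattacking arrangement on a $2\times 2m$ rectangle. By Proposition~\ref{uniqueness} there exist unique indices $(i_k,j_k)\in[m+1]^2$ with $S_k=m_{i_k,j_k}$.

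The core step is proving that $i_1\leq i_2\leq\cdots\leq i_m$ and $j_1\leq j_2\leq\cdots\leq j_m$. Enumerating the $2\times 2$ blocks of $m_{i,j}$ in the two cases of Definition~\ref{defM2m}, one checks that the number of blocks of Type C or D in $m_{i,j}$ equals $i-1$ (these are precisely the blocks contributing a pawn in their bottom-left corner) and the number of blocks of Type B or C equals $j-1$ (the bottom-right corners). To compare consecutive strips, I would use the attack geometry: a pawn in the bottom-left corner of the $\ell$-th block of $S_k$ attacks the top-right corners of the $(\ell-1)$-st and $\ell$-th blocks of $S_{k+1}$, so if the $\ell$-th block of $S_k$ is of Type C or D, then both the $(\ell-1)$-st and $\ell$-th blocks of $S_{k+1}$ must be of Type C or D (Types A and B have pawns in the top-right). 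This gives an inclusion of the set of positions with Type in $\{C,D\}$ from $S_k$ into $S_{k+1}$, whence $i_k\leq i_{k+1}$. Applying the parallel argument to bottom-right pawns, which attack top-left corners of $S_{k+1}$, yields $j_k\leq j_{k+1}$.

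Finally I would recover the preimage. Setting $r_k := i_k+k-1$ and $c_k := j_k+k-1$, the weak monotonicity of $(i_k)$ and $(j_k)$ promotes to strict monotonicity of $(r_k)$ and $(c_k)$, and the bounds $i_k, j_k\leq m+1$ force $r_k,c_k\leq 2m$. Thus $R=\{r_1,\ldots,r_m\}$ and $C=\{c_1,\ldots,c_m\}$ are $m$-subsets of $[2m]$, and the definition of $\Phi$ reconstructs the original arrangement as $\Phi(C\cup R)$. The main obstacle is the monotonicity step; once the indices $i,j$ are translated into explicit pawn-counts, the attack geometry between adjacent strips produces the required set containment almost mechanically, and the remaining steps are bookkeeping.
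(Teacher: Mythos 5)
Your proposal is correct and follows essentially the same route as the paper: decompose the board into $m$ horizontal $2\times 2m$ strips, identify each with an entry of $M_{2m}$ via Proposition~\ref{uniqueness}, and show the row and column indices are weakly increasing so that $R$ and $C$ can be read off. The only difference is presentational: the paper establishes monotonicity by contradiction via a case analysis at a square where consecutive strips differ, whereas you argue it directly through containment of the sets of blocks carrying a bottom-corner pawn --- the underlying attack-geometry facts are identical.
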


\begin{proof}
Any arrangement of $2m^2$ pawns on a $2m\times 2m$ chessboard may be divided into $m$ $2\times 2m$ rectangular boards which correspond to the entries $(m_{a_1, b_1}, \ldots, m_{a_m, b_m})$ in the matrix $M_{2m}$.  For all $i$, as long as $a_i\leq a_{i+1}$ and $b_i \leq b_{i+1}$, then the arrangement is an element of the image $\Phi(A)$.  Suppose to the contrary $a_i > a_{i+1}$ for some $i$.  This implies the arrangement $m_{a_i, b_i}$ is in a lower row in $M_{2m}$ than arrangement $m_{a_{i+1},b_{i+1}}$, but appears directly above $m_{a_{i+1},b_{i+1}}$ in the $2m\times 2m$ arrangement. We apply a similar argument to that used in Proposition~\ref{independent}.  At least one square, say $A_k$ in $m_{a_i,b_i}$ is different from the square in the same position, $B_k$, in $m_{a_{i+1},b_{i+1}}$.  If $A_k$ is of type D, then $B_k$ is of type A or C, hence the pawn in the lower left corner of $A_k$ may attack the pawn in the upper right corner of $B_k$.  If $A_k$ is of Type C, then $B_k$ is of Type A or B, and the pawn in the lower left corner of $A_k$ may attack the pawn in the upper right corner of $B_k$.  Thus $a_i \not> a_{i+1}$.  Similarly, if $b_i > b_{i+1}$, the arrangement $m_{a_i, b_i}$ is in column further to the right in $M_{2m}$ than arrangement $m_{a_{i+1},b_{i+1}}$, but appears directly above $m_{a_{i+1},b_{i+1}}$ in the $2m\times 2m$ arrangement.  Again at least one square, say $A_k$ in $m_{a_i,b_i}$ is different from the square in the same position, $B_k$, in $m_{a_{i+1},b_{i+1}}$.  If $A_k$ of Type B, then $B_k$ is of Type A or D and the pawn in the lower right corner of $A_k$ may attack the pawn in the upper left corner of $B_k$.  Further if $A_k$ is of Type C, then $B_k$ is of Type A or D and the pawn in the lower right corner of $A_k$ may attack the pawn in the upper left corner of $B_k$.  Thus $b_i \not> b_{i+1}$, and we have arrived at the contradiction.
\end{proof}

 Therefore we have the following corollary.

\begin{corollary}
The function $\Phi: A \longrightarrow B$ is a bijection.
\end{corollary}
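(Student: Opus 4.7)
The plan is to bundle the three preceding propositions together; almost all of the work is already done. Proposition \ref{independent} shows $\Phi(A)\subseteq B$, and the immediately preceding proposition shows every maximum arrangement on a $2m\times 2m$ board arises as $\Phi(C\cup R)$ for some $C\cup R\in A$, so $\Phi$ is surjective. The only thing left is injectivity, which I would handle by exhibiting an explicit inverse $\Psi:B\longrightarrow A$ rather than arguing abstractly.

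Given an arrangement in $B$, I would first slice it horizontally into its $m$ consecutive $2\times 2m$ strips, read top to bottom. By Proposition \ref{uniqueness}, each strip equals a unique entry $m_{a_i,b_i}$ of the matrix $M_{2m}$, with $a_i,b_i\in\{1,\ldots,m+1\}$; moreover the preceding proposition forces $a_1\leq a_2\leq\cdots\leq a_m$ and $b_1\leq b_2\leq\cdots\leq b_m$. Then I would set
\begin{equation*}
r_i = a_i + i - 1, \qquad c_i = b_i + i - 1,
\end{equation*}
and define $\Psi$ by sending the arrangement to the pair with row set $R=\{r_1,\ldots,r_m\}$ and column set $C=\{c_1,\ldots,c_m\}$. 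This is exactly the inverse of the shift $(a_i,b_i)=(r_i-i+1,c_i-i+1)$ used in the definition of $\Phi$.

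The next step is to verify that $\Psi$ is well defined, i.e.\ that $R$ and $C$ are genuine $m$-element subsets of $[2m]$. From $a_i\leq a_{i+1}$ one gets $r_{i+1}-r_i=(a_{i+1}-a_i)+1\geq 1$, so the $r_i$ are strictly increasing; combined with $a_i\in[1,m+1]$ this forces $r_i\in[i,m+i]\subseteq[1,2m]$, and the analogous statement holds for the $c_i$. Hence $\Psi$ lands in $A$, and by construction $\Phi\circ\Psi=\mathrm{id}_B$ and $\Psi\circ\Phi=\mathrm{id}_A$, so $\Phi$ is a bijection.

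I do not expect any genuine obstacle at this stage: the combinatorial content has already been extracted in Propositions \ref{uniqueness} and \ref{independent} and in the surjectivity proposition. The only thing to watch is bookkeeping of the $i-1$ shift, ensuring that both the strict inequalities $r_i<r_{i+1}$ and the bounds $r_i\in[1,2m]$ follow from the weaker data $a_i\leq a_{i+1}$ with $a_i\in[1,m+1]$.
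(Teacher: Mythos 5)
Your proposal is correct and follows the same route as the paper, which simply derives the corollary from the three preceding propositions without further argument. The only addition is that you make the injectivity explicit by writing down the inverse $\Psi$ via the shift $r_i = a_i + i - 1$, $c_i = b_i + i - 1$; the paper leaves this step implicit (it is guaranteed by Proposition~\ref{uniqueness} together with the invertibility of the index shift), and your bookkeeping of the bounds $r_i \in [1,2m]$ and the strict monotonicity is accurate.
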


Hence, because we may choose an $m$-subset of $[2m]$ in ${2m\choose m}$ ways, we have our main result.

\begin{theorem}
The number of maximum nonattacking arrangements of pawns on a $2m\times 2m$ chessboard is ${2m\choose m}^2$.
\end{theorem}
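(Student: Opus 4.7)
The plan is almost entirely to cash in on the preceding corollary: since $\Phi: A \to B$ is a bijection, we have $|B| = |A|$, and the theorem reduces to counting the elements of $A$. So my proof proposal is basically a single application of the product rule, with the counting justification made explicit.

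First I would observe that by the corollary, $|B| = |A|$, so the number of maximum nonattacking arrangements of pawns on the $2m \times 2m$ chessboard is exactly the cardinality of $A$. Recall that $A$ consists of pairs of subsets $(R, C)$, where $R, C \subset [2m]$ each have size $m$; although the defining formula writes the element as $C \cup R$, the rows and columns play distinguishable roles in the subsequent construction of $\Phi$, so an element of $A$ is genuinely a pair and not a single subset of $[2m]$.

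Next I would invoke the multiplication principle: there are $\binom{2m}{m}$ ways to choose the $m$-element row subset $R \subset [2m]$, and independently $\binom{2m}{m}$ ways to choose the $m$-element column subset $C \subset [2m]$. Multiplying, $|A| = \binom{2m}{m}^2$, and hence $|B| = \binom{2m}{m}^2$ as claimed.

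The hard part will not be in this final step at all; all of the genuine work has already been discharged in Proposition~\ref{uniqueness} (realizing every $2 \times 2m$ arrangement exactly once in $M_{2m}$), in Proposition~\ref{independent} (checking that concatenation along a weakly increasing pair $(a_i, b_i)$ never introduces an attack between adjacent strips), and in the final proposition (showing that every maximum arrangement forces $a_i \le a_{i+1}$ and $b_i \le b_{i+1}$, so every element of $B$ arises from some element of $A$). Once these ingredients give the bijection, the theorem is a direct counting statement with no remaining obstacle.
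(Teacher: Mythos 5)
Your proposal is correct and matches the paper's own argument, which likewise derives the theorem immediately from the corollary that $\Phi$ is a bijection together with the observation that an $m$-subset of $[2m]$ can be chosen in ${2m\choose m}$ ways for the rows and again for the columns. Your added remark that elements of $A$ are really ordered pairs $(R,C)$ rather than literal unions is a reasonable clarification but does not change the approach.
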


We may generalized this result to maximum independent arrangements of pawns on $2n\times 2m$ rectangles.

\begin{theorem}
The number of maximum nonattacking arrangements of pawns on a $2n\times 2m$ chessboard is ${m+n\choose n}^2$.
\end{theorem}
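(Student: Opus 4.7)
The plan is to recycle essentially verbatim the bijective machinery already built for the square case, observing that the matrix $M_{2m}$ of Definition~\ref{defM2m} enumerates $2\times 2m$ strips regardless of how many such strips we eventually stack, and that the arguments in Propositions~\ref{independent} and the preceding surjectivity proposition are purely local between two adjacent strips. The only change is the indexing set $A$: instead of choosing $m$ strips via an $m$-subset of rows and an $m$-subset of columns of $[2m]$, we now need to choose a weakly increasing sequence of $n$ row-indices and $n$ column-indices from $[m+1]$, and the right combinatorial vehicle for this is $n$-subsets of $[m+n]$.

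Concretely, I would redefine
\begin{equation*}
A' = \{C \cup R : C, R \subset [m+n],\ |C|=|R|=n\},
\end{equation*}
and, for $C\cup R\in A'$ with $r_1<\cdots<r_n$ and $c_1<\cdots<c_n$, set $(a_i,b_i)=(r_i-i+1,c_i-i+1)$ for $1\le i\le n$. Since the $r_i$ are strictly increasing in $[m+n]$, the coordinates $a_i$ lie in $[m+1]$ and the sequence $(a_i)$ is weakly increasing; likewise for $(b_i)$. The generalized map $\Phi':A'\longrightarrow B'$ (where $B'$ is the set of nonattacking arrangements of $2mn$ pawns on a $2n\times 2m$ chessboard) concatenates $m_{a_1,b_1},\ldots,m_{a_n,b_n}$ top-to-bottom from the matrix $M_{2m}$.

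Next I would verify the two directions of the bijection. For well-definedness and independence (the analogue of Proposition~\ref{independent}), one observes that the case analysis on how Type A/B/C/D squares in an upper strip can attack squares in the strip directly below is a purely local check between two consecutive entries $m_{a_i,b_i}$ and $m_{a_{i+1},b_{i+1}}$ with $a_i\le a_{i+1}$ and $b_i\le b_{i+1}$; that analysis is insensitive to the total number $n$ of stacked strips and applies unchanged. For surjectivity, given any maximum arrangement split into $n$ horizontal $2\times 2m$ strips $(m_{a_1,b_1},\ldots,m_{a_n,b_n})$, the same contradiction-by-attack argument as in the final proposition shows $a_i\le a_{i+1}$ and $b_i\le b_{i+1}$ for all $i$, so the sequence lifts to a unique pair of strictly increasing sequences in $[m+n]$ and hence to a unique $C\cup R\in A'$.

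The final count is immediate: the number of $n$-subsets of $[m+n]$ is $\binom{m+n}{n}$, so $|A'|=\binom{m+n}{n}^2$, giving the theorem. The main "obstacle" is essentially bookkeeping --- checking that every local argument in the earlier proofs truly depends only on two adjacent strips and on the matrix $M_{2m}$ itself, neither of which cares whether we are stacking $m$ strips or $n$ strips. Once that observation is made, no new combinatorial content is required.
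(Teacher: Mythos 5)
Your proposal is correct and follows essentially the same route as the paper: both reuse the matrix $M_{2m}$ and the local adjacent-strip arguments unchanged, replacing only the indexing by $n$-subsets of $[m+n]$ via the shift $(a_i,b_i)=(r_i-i+1,c_i-i+1)$ to convert strictly increasing subsets into weakly increasing sequences in $[m+1]$. The paper presents the correspondence in the inverse direction (from an arrangement to the subset), but the bijection and the count $\binom{m+n}{n}^2$ are identical.
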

\begin{proof}
Assume without loss of generality that $n\leq m$.  We may utilize the bijection $\Phi$ from above.  Given a nonattacking arrangement of $2mn$ pawns on a $2n\times 2m$ chessboard, we may divide the arrangement into $n$ rectangles of size $2\times 2m$.  These correspond to $n$ (not necessarily distinct) entries in the matrix $M_{2m}$.  Thus we have a set of indices from the matrix entries
\begin{equation*}
S=\{(a_i, b_i) | 1 \leq a_1 \leq a_2 \leq \cdots \leq a_n\leq m+1 \hbox{ and }1 \leq b_1 \leq b_2 \leq \cdots \leq b_n \leq m+1 \}.
\end{equation*}
Two create distinct column and row entries we have
\begin{equation*}
C \cup R = \{a_1, a_2+1, a_3+2, \ldots, a_n+n-1\} \cup \{b_1, b_2+1, b_2+3, \ldots, b_n+n-1\}.
\end{equation*}
We note the maximum value of elements in $C$ or $R$ is $m+n$, thus $C, R \subset [m+n]$.  Hence we are choosing an $n$-subset of rows from $[m+n]$ and an $n$-subset of columns from $[m+n]$, and the result follows.
\end{proof}


\newcommand{\journal}[6]{{\sc #1,} #2, {\it #3}, {\bf #4}, #6 (#5)}
\newcommand{\book}[4]{{\sc #1,} #2, #3 (#4)}

\end{document}